\numberwithin{equation}{section}
\numberwithin{subsection}{section}
\newenvironment{enumeratea}
{\begin{enumerate}[\upshape (a)]}
{\end{enumerate}}
\newtheorem*{namedtheorem}{\theoremname}
\newcommand{\theoremname}{testing}
\newtheorem{theorem}{Theorem}[section]
\newtheorem{proposition}[theorem]{Proposition}
\newtheorem{proposition-definition}[theorem]
{Proposition-Definition}
\newtheorem{corollary}[theorem]{Corollary}
\newtheorem{lemma}[theorem]{Lemma}
\newcommand \fr{\operatorname{F}}
\theoremstyle{definition}
\newtheorem{definition}[theorem]{Definition}
\newtheorem{example}[theorem]{Example}
\theoremstyle{remark}
\DeclareMathOperator{\Lie}{Lie} \DeclareMathOperator{\W}{W}
 \newcommand\cB{\mathcal{B}}
\newcommand\cG{\mathcal{G}} 
 \newcommand\cL{\mathcal{L}}
\newcommand\GG{\mathbb{G}}
 \newcommand\ZZ{\mathbb{Z}}
\newcommand\rma{\mathrm{a}} 
 \newcommand\rmd{\mathrm{d}}
\newcommand\rmm{\mathrm{m}}
\newcommand\rmu{\mathrm{u}}
\newcommand\arr{\ifinner\to\else\longrightarrow\fi}
\newcommand\arrto{\ifinner\mapsto\else\longmapsto\fi}
\newcommand \In{\subseteq}
\renewcommand\H{\operatorname{H}}
\newcommand\eqdef{\overset{\mathrm{\scriptscriptstyle def}} =}
\renewcommand\th{^\text{th}}
\def\displaytimes_#1{\mathrel{\mathop{\times}\limits_{#1}}}
\def\displayotimes_#1{\mathrel{\mathop{\bigotimes}\limits_{#1}}}
 \DeclareMathOperator{\Gl}{Gl}
\newcommand\aut{\operatorname{Aut}}
\newcommand\spec{\operatorname{Spec}}
\newcommand\id{\mathrm{id}}
\newcommand\pr{\operatorname{pr}}
\newcommand\doublelong[2]{\mathbin{\xymatrix{{}\ar@<3pt>[r]^{#1}
\ar@<-3pt>[r]_{#2}&}}}
\newlength{\ignora}
\newcommand{\catsch}[1]{(\mathrm{Sch}/#1)}
\newcommand{\mmu}{\boldsymbol{\mu}}
\newcommand{\aalpha}{\boldsymbol{\alpha}}
\newcommand{\ed}{\operatorname{ed}}
\newcommand{\lie}{\operatorname{Lie}}
\newcommand{\trdeg}{\operatorname{tr\,deg}}
\newcommand{\gm}{\GG_{\rmm}}
\newcommand{\ga}{\GG_{\rma}}
\begin{document}

\title[On the essential dimension of infinitesimal group schemes]{On the essential dimension\\of infinitesimal group schemes}

\author{Dajano Tossici}
\author[Angelo Vistoli]{Angelo Vistoli}

\address{Scuola Normale Superiore\\Piazza dei Cavalieri 7\\
56126 Pisa\\ Italy}
\email[Tossici]{dajano.tossici@sns.it}
\email[Vistoli]{angelo.vistoli@sns.it}

\date{September 4, 2010}

\maketitle

\begin{abstract}
We discuss essential dimension of group schemes, with particular attention to infinitesimal group schemes. We prove that the essential dimension of a group scheme of finite type over a field $k$ is greater than or equal to the difference between the dimension of its Lie algebra and its dimension. Furthermore, we show that the essential dimension of a trigonalizable group scheme of length $p^{n}$ over a field of characteristic~$p>0$ is at most~$n$. We give several examples.

\end{abstract}

\section{Introduction}

The notion of essential dimension of a finite group over a field $k$ was introduced by Buhler and Reichstein (\cite{BR}). It was
later extended to various contexts. First Reichstein
generalized it to linear algebraic groups (\cite{Re}) in
characteristic zero; afterwards Merkurjev gave a general definition for covariant functors from the
category of extension fields of the base field $k$ to the category of sets
(\cite{BF}). Brosnan, Reichstein and Vistoli (\cite{BRV})
studied the essential dimension of  algebraic stacks, a general class which
includes almost all the examples of interest.

Important results  on the essential dimension of finite groups in
characteristic $0$ have been proved by Florence (\cite{Flo}) and
Karpenko and Merkurjev (\cite{KM}). The essential dimension of
finite groups in positive characteristic has been studied by Ledet
\cite{Le}. As to higher dimensional groups, there are works of
Reichstein and Youssin (\cite{Rei-You}), Chernousov and Serre
(\cite{Che-Ser}), Gille and Reichstein \cite{Gil-Rei}, Brosnan,
Reichstein and Vistoli \cite{BRV2} on algebraic groups, Brosnan
\cite{Bro} on abelian varieties over $\mathbb{C}$, and Brosnan and
Shreekantan \cite{brosnan-shreekantan08} on abelian varieties over
number fields.
The case of non-smooth group schemes (necessarily in positive
characteristic) has not been investigated.

Let $k$ be a field of characteristic $p > 0$. For any scheme $X$ over $k$  let  $X^{(p)}$ denote the scheme $X\times_{\spec k}\spec k$,
   where $\spec k$ is endowed with the structure of $k$-scheme via
   the absolute Frobenius, which at the level of algebras is given by $a \arrto a^{p}$. We will endow $X^{(p)}$ with the structure of a $k$-scheme given by the projection on the second factor. Moreover we consider the relative Frobenius $\fr: X\arrto X^{(p)}$, which is the $k$-morphism induced by the absolute Frobenius on $X$.
We observe that if the $k$-scheme $X$ is in fact defined over $\mathbb{F}_p$ then $X^{(p)}$ is isomorphic to $X$. When this is the case we replace $X^{(p)}$ with $X$.

 If $m$ is a positive integer, we denote as usual by $\aalpha_{p^{m}}$ the kernel of the $m$-th power of the relative Frobenius $F^{m}\colon \ga \arr \ga$ over $k$, while $\mmu_{p^{m}}$ is the kernel of the ${p^{m}}\th$-power homomorphism $\gm \arr \gm$.
It is certainly known to
experts that the essential dimensions of $\aalpha_{p^m}$ and
$\mmu_{p^m}$
are $1$, and
that the essential dimension of $\mmu_{p^{m}}^n$ is $n$.  To the
authors' knowledge nothing else was known. The purpose of this paper
is to throw some light on this subject. In particular we will focus
on the essential dimension of infinitesimal (i.e. connected and
finite) group schemes. This could give, for instance, some
information about the essential dimension of Abelian varieties over
a field $k$ of characteristic $p > 0$ with $p$-rank equal to zero.
The $p$-rank $f$ of an Abelian variety $A$ is defined by
$A[p](\bar{k})\simeq (\ZZ/p\ZZ)^f$, where $A[p]$ is the group scheme
of $p$-torsion points and $\bar{k}$ is an algebraic closure of $k$.
If an Abelian variety $A$ has positive $p$-rank, its essential dimension is conjecturally infinite, since for any $n > 0$, $A$ contains, over $\bar{k}$, the group scheme $\ZZ/p^n \ZZ$, whose essential dimension is conjecturally equal to $n$ (see Lemma~\ref{lem1} and Example \ref{ex4}).

If $G$ is a group scheme of
finite type over a field $k$, its essential dimension $\ed_{k}G$
is the essential dimension of the stack $\mathcal{B}_kG$. Let us
recall the definition. Let $k$ be a field, and $G$ be a
group scheme of finite type over $k$. If $X$ is a  an algebraic space, over $k$, a
$G$-torsor on $X$ is  an algebraic space $P$ over $k$ with a right action of $G$, with a
$G$-invariant morphism $P \arr X$, such that fppf locally on $X$
the scheme $P$ is $G$-equivariantly isomorphic to $X\times_{\spec
k}G$. We recall that if $X$ is a scheme and $G$ is affine then any $G$-torsor over $X$ is in fact a scheme (\cite[Theorem III 4.3]{Mi}). Isomorphism classes of $G$-torsors on $X$ form a pointed set
$\H^{1}(X, G)$; if $G$ is commutative, then $\H^{1}(X, G)$ is a
group, and coincides with the cohomology group of $G$ in the fppf
topology.

\begin{definition}
Let $G$ be a group scheme of finite type over a field $k$. Let
$k\In K$ be an extension field and $[\xi]\in \H^1(\spec(K),G)$ the
class of a $G$-torsor $\xi$. Then the essential dimension of $\xi$
over $k$, which we denote by $\ed_k\xi$, is the smallest
nonnegative integer $n$ such that there exists a subfield $L$ of
$K$ containing $k$, with $\trdeg(L/k) \leq n$  such that $[\xi]$ is
in the image of the morphism $\H^1(\spec(L),G)\arr
\H^1(\spec(K),G)$.

The essential dimension of $G$ over $k$, which we denote by
$\ed_k G$, is the supremum  of $\ed_k \xi$, where $K/k$ ranges
through all the extension of $K$, and $\xi$ ranges through all the
$G$-torsors over $\spec(K)$.

The essential dimension  of $\xi$ at a prime integer $p$, which we
denote by $\ed_k(\xi; p)$, is  the minimal value of
$\ed_k(\xi_{K'})$, as $K'$ ranges over all finite field extensions
$K'/K$ such that $p$ does not divide the degree $[K' {:} K]$ and
$[\xi_{K'}]$ is the image of $[\xi]$ in $\H^1(\spec(K'),G)$.
Finally the essential dimension $\ed_k(G; p)$ is  defined as the
supremum value of $\ed_k(\xi; p)$, as $K$ ranges over all field
extensions of $k$ and $\xi$ ranges over $\H^1(\spec(K),G)$.
\end{definition}

It is clear from definition that $\ed_k G\ge \ed_k(G;p)$ for any
integer $p$. If $G$ is smooth, then $G$-torsors are locally
trivial in the étale topology, and our definition coincides with
that of Berhuy and Favi (\cite{BF}); in the general case, to get
meaningful results one needs to use the fppf topology. For
example, if $G$ is an infinitesimal group scheme, the $G$-torsors
over a reduced scheme that are locally trivial in the étale
topology are in fact trivial.

Our first result is a general lower bound for the essential dimension.

\begin{theorem}\label{thm1}
Let $G$ be a group scheme of finite type over a field $k$ of
characteristic $p\ge 0$. Then
   \[
   \ed_{k} (G;p) \geq \dim_{k} \lie G - \dim G\,.
   \]
\end{theorem}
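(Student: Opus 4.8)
The plan is to prove the (stronger) inequality for a single, sufficiently generic torsor and to read the bound off from the module of K\"ahler differentials of its total space. Write $\ell=\dim_k\lie G$ and $g=\dim G$; since $\ell=g$ whenever $G$ is smooth, the statement has content only for non-smooth $G$, and I may take $G$ to be affine, embedding $G\into\Gl_n$ (the general finite type case is handled by replacing $\Gl_n$ below by any smooth $k$-variety carrying a generically free $G$-action whose quotient is a torsor over a dense open). Let $G$ act on $V=\Gl_{n,k}$ by right translation, so that $V\arr V/G$ is a $G$-torsor over a dense open of $S:=V/G$; set $K:=k(S)$ and let $\xi$ be the generic fibre, a $G$-torsor $\spec B\arr\spec K$. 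Since $B$ is a localization of the (domain) coordinate ring of an affine open of $V$, it is a domain with $\operatorname{Frac} B=F:=k(\Gl_n)$; crucially $F/k$ is \emph{separably generated}, as $\Gl_n$ is smooth. It therefore suffices to show $\ed_k\xi\ge\ell-g$.

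So suppose $\xi$ descends to a subfield $k\In L\In K$ with $\trdeg_k L=d$: there is a $G$-torsor $\xi_L=\spec B_0$ over $\spec L$ with $B_0\otimes_L K\cong B$. Then $B_0$ is a domain, $L':=\operatorname{Frac} B_0$ is a subfield of $F$ generating $F$ over $K$, and $\trdeg_k L'=d+g$. Two observations drive the argument. First, since $\spec B\arr\spec K$ is a $G$-torsor and the cotangent bundle of a group scheme is free of rank $\dim_k\lie G$, the module $\Omega_{B/K}$ is locally free of rank $\ell$, so $\Omega_{F/K}=\Omega_{B/K}\otimes_B F$ has dimension $\ell$ over $F$. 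Second, as $F$ is generated over $K$ by $L'$, the differentials $d_{F/K}(b)$ with $b\in L'$ span $\Omega_{F/K}$, and each is the image of $db\in\Omega_{L'/k}$ under $\Omega_{L'/k}\otimes_{L'}F\arr\Omega_{F/k}\arr\Omega_{F/K}$. Hence the image $I$ of $\Omega_{L'/k}\otimes_{L'}F$ in $\Omega_{F/k}$ surjects onto $\Omega_{F/K}$, giving $\dim_F I\ge\ell$.

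The other, and main, estimate is an upper bound on $\dim_F I$ in terms of $\trdeg_k L'$, and this is where imperfection of $k$ must be confronted. From the right-exact cotangent sequence $\Omega_{L'/k}\otimes_{L'}F\arr\Omega_{F/k}\arr\Omega_{F/L'}\arr 0$ the image $I$ is the kernel of the (surjective) second map, so $\dim_F I=\dim_F\Omega_{F/k}-\dim_F\Omega_{F/L'}$. Now $\dim_F\Omega_{F/k}=\trdeg_k F$ because $F/k$ is separably generated, while $\dim_F\Omega_{F/L'}\ge\trdeg_{L'}F$ holds unconditionally; therefore
\[
\dim_F I\le \trdeg_k F-\trdeg_{L'}F=\trdeg_k L'=d+g.
\]
Combining with $\dim_F I\ge\ell$ yields $d\ge\ell-g$, that is, $\ed_k\xi\ge\dim_k\lie G-\dim G$.

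Finally, to obtain the sharper statement involving $\ed_k(\,\cdot\,;p)$, I run the same computation after an arbitrary finite prime-to-$p$ (hence separable) extension $K'/K$: the compositum $F':=F\cdot K'$ is again separably generated over $k$, and base change preserves the rank $\ell$ of the relevant module of differentials, so the descent field of $\xi_{K'}$ still has transcendence degree $\ge\ell-g$. Taking the minimum over such $K'$ gives $\ed_k(\xi;p)\ge\ell-g$, whence $\ed_k(G;p)\ge\dim_k\lie G-\dim G$. I expect the main obstacle to be precisely the imperfect-field step above: one must arrange that the ambient field $F$ is separably generated over $k$, which forces $\dim_F\Omega_{F/k}=\trdeg_k F$, while the torsor itself contributes the full inseparable rank $\ell=\dim_k\lie G$ to $\Omega_{F/K}$. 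It is the interplay of these two facts, rather than any single cohomological input, that produces the bound.
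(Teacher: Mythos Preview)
Your argument via K\"ahler differentials is essentially correct and takes a genuinely different route from the paper's. The paper first passes to the Frobenius kernel $G_{1}\subseteq G$ (using the subgroup inequality $\ed_{k}(G_{1};p)\le\ed_{k}(G;p)+\dim G$ together with $\lie G_{1}=\lie G$), base-changes to an algebraically closed field, and then exploits that $G_{1}$ has height~$\le 1$: the relative Frobenius on the total space of any $G_{1}$-torsor factors through the base, and comparing the degree $p^{\trdeg}$ of Frobenius with the order $p^{\dim_{k}\lie G}$ of $G_{1}$ gives the bound. You instead work directly over the given $k$ and for affine $G$ of arbitrary height, reading the bound off the cotangent sequence; the decisive point, which you isolate clearly, is that the ambient field $F=k(\Gl_{n})$ is \emph{separably} generated over $k$ (so $\dim_{F}\Omega_{F/k}=\trdeg_{k}F$) while the torsor forces $\dim_{F}\Omega_{F/K}=\ell$. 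The paper's proof is more elementary but needs two reductions; yours is more direct and makes the role of inseparability transparent.

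Two points deserve tightening. First, your reduction to the affine case is not justified by the parenthetical: for non-affine $G$ there is no obvious smooth variety with a generically free $G$-action yielding a torsor. The clean fix is exactly the paper's first step---pass to the Frobenius kernel $G_{1}$, which is finite (hence affine) with $\lie G_{1}=\lie G$, and use the subgroup inequality to recover the bound for $G$. Second, in the prime-to-$p$ step $B'=B\otimes_{K}K'$ need not be a domain (e.g.\ if the \'etale quotient of $G$ splits over $K'$), so ``the compositum $F\cdot K'$'' is not the total ring of fractions of $B'$. This is easily repaired: since $K'/K$ is separable, $B'$ is reduced; localizing at a minimal prime $\mathfrak q$ gives a field $F''$, equal to a factor of $F\otimes_{K}K'$ and hence separably generated over $k$, with $\dim_{F''}\Omega_{F''/K'}=\ell$ because $\Omega_{B'/K'}$ is locally free of rank $\ell$. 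The descent $B_{0}$ is likewise reduced, the contraction $\mathfrak p=\mathfrak q\cap B_{0}$ is minimal, and your inequality goes through verbatim with $L'':=(B_{0})_{\mathfrak p}$ in place of~$L'$.
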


We also have a fairly general upper bound. Let us recall the definition of a trigonalizable group scheme.

\begin{definition}
Let $G$ be an affine group scheme of finite type over a field $k$. We say that
$G$ is \emph{trigonalizable} if it has a normal unipotent subgroup scheme $U$ such that $G/U$ is diagonalizable.
\end{definition}

The name is justified by the fact that, every trigonalizable group scheme over $k$, is a subgroup scheme of the group scheme of upper triangular $n \times n$ matrices over $k$, for some $n$. Any affine commutative group scheme over an algebraically closed field is trigonalizable (see
\cite[Theorem IV \S 3 ,1.1]{DG}).

\begin{theorem}\label{thm2}
Let $G$ be a finite trigonalizable group scheme over a field of
characteristic $p > 0$, of order $p^{n}$. Then $\ed_k G \leq n$.
\end{theorem}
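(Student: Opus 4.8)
The plan is to prove the bound $\ed_k G \le n$ by induction on the length $n$ of $G$, exploiting the structure of trigonalizable group schemes. The key idea behind essential dimension upper bounds is that one wants to exhibit every $G$-torsor as pulled back from a torsor over a base of small transcendence degree; equivalently, one wants to realize $G$ as a subgroup of a suitable "versal" ambient object whose torsors are easy to control.

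Let me think about the structure. A trigonalizable $G$ sits in an extension where we have a normal unipotent $U$ with $G/U$ diagonalizable. Over characteristic $p$, a finite diagonalizable group scheme is a product of $\mmu_{p^{m_i}}$'s, and these have essential dimension $1$ each (known to experts, as the introduction notes). A finite infinitesimal unipotent group scheme is built from copies of $\aalpha_p$, again each of essential dimension $1$. So the naive hope is: if $G$ has a filtration by normal subgroups with each successive quotient of length $p$ (hence essential dimension $\le 1$), then essential dimension is subadditive along the filtration, giving the bound $n$.

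**First** I would establish the induction step as a general subadditivity statement: if $1 \to N \to G \to Q \to 1$ is an exact sequence of finite group schemes with $N$ central (or normal), then $\ed_k G \le \ed_k N + \ed_k Q$, or more precisely a version adapted to torsors. The mechanism is the cohomology exact sequence: given a $G$-torsor over $\spec K$, its image in $\H^1(K,Q)$ descends to a field $L_1$ with $\trdeg(L_1/k) \le \ed_k Q$, and then the fiber—an $N$-torsor (twisted form)—descends to a further extension of transcendence degree $\le \ed_k N$. One must be careful that the twisting of $N$ by the $Q$-torsor does not change its essential dimension; this is where centrality or a careful analysis of inner forms helps.

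**Second**, I would reduce to the base case. The real content is that a length-$p$ trigonalizable group scheme has $\ed_k \le 1$. By the structure theory, such a group is one of $\mmu_p$, $\aalpha_p$, or $\ZZ/p\ZZ$ (in the �tale case), and each is known to have essential dimension $1$; more carefully, since we are in the trigonalizable setting the successive quotients of a composition series are of these forms. Then a composition series of length $n$ (each factor of length $p$) combined with subadditivity yields $\ed_k G \le n$.

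**The main obstacle** I expect is precisely the subadditivity step, and specifically handling the non-commutativity and the twisting. In the exact sequence $1 \to N \to G \to Q \to 1$, after descending the $Q$-part, the remaining obstruction lives in $\H^1$ of a \emph{twisted form} ${}^\sigma\! N$ of $N$, and one needs that such twisted forms still have essential dimension bounded by the length contribution. For unipotent $N$ in characteristic $p$ this is delicate because extensions by $\aalpha_p$ need not split and the relevant torsors are genuinely non-�tale; one cannot argue purely on $\bar k$-points. I would address this by choosing the filtration so that each $N$ is a \emph{characteristic} subgroup (e.g., using the Frobenius kernel filtration, since $\fr\colon G \to G^{(p)}$ and its iterates give canonical normal subgroups), so that the relevant twists are controlled and the length is additive along the filtration. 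Realizing $G$ inside upper-triangular matrices and using the explicit structure of $\mmu_{p^m}$ and $\aalpha_{p^m}$ as kernels of Frobenius should make the base estimates effective and keep the transcendence degree accounting tight.
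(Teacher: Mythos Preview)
Your inductive strategy and your identification of the twisting issue are on the right track, but there is a genuine gap: you have not addressed the \emph{lifting} obstruction. When you descend the $Q$-torsor $\bar P$ to a field $L_1$, you then write ``the fiber---an $N$-torsor---descends''; but for that fiber even to be nonempty over $L_1$ you must first lift the descended $Q$-torsor $\bar P_{L_1}$ back to a $G$-torsor over $L_1$. The obstruction to this lifting lies in $\H^{2}(L_1,\widetilde{N})$, and for $N = \mmu_p$ this group is the $p$-torsion of the Brauer group of $L_1$, which is in general nonzero. Thus the naive subadditivity $\ed_k G \le \ed_k N + \ed_k Q$ is \emph{not} a theorem for arbitrary normal $N$, and your plan to filter by length-$p$ subquotients---which will produce $\mmu_p$-kernels whenever $G$ is not unipotent---breaks down at exactly this point. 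Your ``main obstacle'' paragraph discusses only the twisting of $N$, not the existence of a lift; these are different issues, and the second is the decisive one.

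The paper's proof repairs this in two moves. First, it handles the diagonalizable part in one stroke rather than by filtration: a diagonalizable group of order $p^n$ embeds in $\gm^{r}$ with $r \le n$, and Lemma~\ref{lem1} gives $\ed_k \le r$ directly, bypassing any $\H^2$ issue. Second, for the inductive step it always takes the kernel $G_1$ to be \emph{commutative unipotent} (the center of the unipotent radical), and proves that for such $G_1$ one has $\H^{2}(E,\widetilde{G_1}) = 0$ over every field $E$ (Lemma~\ref{lem2}, ultimately because $\H^{\ge 1}(E,\ga)=0$). This vanishing makes the gerbe of liftings neutral and is precisely what drives the subadditivity statement (Lemma~\ref{lem:main}). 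Your proposed Frobenius-kernel filtration would not organize the induction this way in any case: on �tale constituents such as $\ZZ/p^n\ZZ$ the Frobenius filtration is trivial, and on the diagonalizable part it again produces $\mmu_p$'s as kernels. The missing idea is the $\H^{2}$-vanishing for unipotent kernels, together with the decision to structure the induction so that only such kernels occur.
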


For constant $p$-group schemes (which are unipotent) the above result has already been proved by Ledet (\cite{Le}).

The second and the third sections are devoted to the proofs of these
two theorems. In the last section, combining the lower and upper
bounds above, we calculate the essential dimension of some classes
of infinitesimal group schemes. In particular we prove that the
essential dimension of a trigonalizable group scheme of height $\le
1$, i.e. such that the Frobenius $\fr$ is trivial on it, is equal to
the dimension of its Lie algebra (Corollary~\ref{Cor1}). Example~\ref{counterexample}, due to R.~Lötscher, M.~MacDonald, A.~Meyer and Z.~Reichstein,  shows that there exist  infinitesimal group schemes with essential dimension strictly larger than the dimension of its Lie algebra. However, we don't have similar examples over an algebraically closed field. In Example~\ref{ex:test-example} we propose a class of a commutative unipotent group scheme whose essential dimensions we are unable to determine, which should be an important test case to determine whether it is reasonable to conjecture that equality holds for trigonalizable group schemes.

\subsection*{Acknowledgements} We would like to thank P.~Brosnan, F.~Oort and Z.~Reichstein for useful comments and corrections to the first version of this paper. We are also in debt with the referee for his very detailed and useful report.

\section{The proof of Theorem~\ref{thm1}}
First we first state a well known fact.

\begin{lemma}[{\cite[Corollary~4.3]{merkurjev-ed}}]\label{lem1}
If $G$ is a subgroup scheme of a  group scheme $H$, then

\begin{enumeratea}

\item $\ed_{k} G +\dim G\leq \ed_k H + \dim H$, and

\item $\ed_{k} (G;p) +\dim G\leq \ed_k (H;p) + \dim H$ for any
prime $p$.

\end{enumeratea}
\end{lemma}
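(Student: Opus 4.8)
The plan is to exploit the standard correspondence between reductions of structure group and points of an associated quotient bundle, together with the elementary fact that the residue field of a point on a scheme of finite type and dimension $d$ over a field $L$ has transcendence degree at most $d$ over $L$. This is essentially Merkurjev's argument; I would organize it as follows.

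I would begin with part (a). Fix an extension $K/k$ and a $G$\dash torsor $\xi$ over $\spec K$; it suffices to bound $\ed_{k}\xi$. Extending the structure group along $G \into H$ produces an $H$\dash torsor $\eta \eqdef \xi \times^{G} H$ over $\spec K$. Applying the definition of essential dimension to $\eta$, there is an intermediate field $k \In L \In K$ with $\trdeg(L/k) \le \ed_{k} H$ and an $H$\dash torsor $\eta_{0}$ over $\spec L$ whose base change to $K$ is isomorphic to $\eta$. The next step is to recover $\xi$ as a reduction of $\eta$ to $G$. Reductions of the structure group of an $H$\dash torsor to $G$ are classified by sections of an associated bundle with fibers isomorphic to $H/G$; concretely $\xi$ corresponds to an $L$\dash morphism $s\colon \spec K \arr Y$, where $Y \eqdef \eta_{0}/G$. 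Since $\eta_{0}$ becomes trivial over $\bar L$, the scheme (or algebraic space) $Y$ is a twisted form of $H/G$, so $\dim Y = \dim H - \dim G$. Let $y \in Y$ be the image of $s$ and $L'' \eqdef \kappa(y)$ its residue field; then $L \In L'' \In K$ and $\trdeg(L''/L) \le \dim Y = \dim H - \dim G$. The canonical $L''$\dash point $\spec L'' \arr Y$ is a reduction of $(\eta_{0})_{L''}$ to $G$, hence gives a $G$\dash torsor $\xi_{0}$ over $\spec L''$ whose base change to $K$ is $\xi$. Therefore
\[
\ed_{k}\xi \le \trdeg(L''/k) = \trdeg(L''/L) + \trdeg(L/k) \le (\dim H - \dim G) + \ed_{k} H,
\]
and taking the supremum over all $\xi$ yields (a).

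For part (b) I would run the same construction after one preliminary reduction. Given $\xi$ over $K$, set $\eta \eqdef \xi \times^{G} H$ and choose a finite extension $K'/K$ of degree prime to $p$ realizing $\ed_{k}(\eta;p)$, so that $\ed_{k}(\eta_{K'}) = \ed_{k}(\eta;p) \le \ed_{k}(H;p)$. Applying the argument of (a) verbatim over $K'$ produces a subfield $L'' \In K'$ with $\trdeg(L''/k) \le \ed_{k}(H;p) + \dim H - \dim G$ together with a $G$\dash torsor descending $\xi_{K'}$ to $L''$. Since $[K' {:} K]$ is prime to $p$, this bounds $\ed_{k}(\xi;p)$, and the supremum over $\xi$ gives (b).

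The main obstacle is making the reduction\dash of\dash structure\dash group correspondence precise in this generality: $G$ and $H$ need not be smooth, so one must work in the fppf topology and verify that the quotient $Y = \eta_{0}/G$ exists (as a scheme when $H$ is affine, in general as an algebraic space) and is a twisted form of $H/G$ of the expected dimension $\dim H - \dim G$. Once the bijection between $K$\dash points of $Y$ and reductions of $\eta_{K}$ to $G$ is established, the transcendence\dash degree estimate is routine, and the compatibility with base change that makes the prime\dash to\dash $p$ step in (b) legitimate is formal.
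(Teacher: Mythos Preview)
The paper does not give its own proof of this lemma: it is stated as a well known fact and attributed to Merkurjev via the citation in the statement. Your proposal is correct and is precisely Merkurjev's argument, so there is nothing to compare; the only point worth flagging is that the existence of $Y=\eta_{0}/G$ as a finite-type algebraic space (an fppf form of $H/G$) is what you need to invoke, after which the residue-field transcendence-degree bound is exactly as you wrote.
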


Now we prove the Theorem. If the characteristic of $k$ is $0$,
then $G$ is smooth and there is nothing to prove. Suppose that the
characteristic of $k$ is $p
> 0$. Since the essential dimension does not increase after a base
change (\cite[Proposition 1.5]{BF}), we may assume that $k$ is  algebraically
closed.

   Let $G_{1}$ be the kernel of the relative Frobenius map $\fr: G \arr G^{(p)}$; then $G_{1}$ is an
infinitesimal group scheme, and $\lie G_{1} = \lie G$. Since by
Lemma \ref{lem1} we have $\ed_k (G_{1};p) \leq \ed_k (G;p) + \dim
G$, it is sufficient to show that $\ed_k (G_{1};p) \geq \dim_k
\lie G_{1}$; in other words, we may assume that $G = G_{1}$, i.e.,
by definition, $G$ has height at most~$1$.


Let $G$ act freely on an open subscheme $X$ of a representation of
$G$. If $K$ is the function field of the quotient $X/G$ and $E$ is
the function field of $X$, then we have a $G$-torsor
$\spec E \arr \spec K$. Set $n \eqdef \dim_{k} \lie G$. Let $K'/K$
be any extension of degree coprime to $p$. We need to prove that
for any such $K'$ then the essential dimension of the $G$-torsor
$\spec (E\otimes_K K') \arr \spec K'$ is at least $\dim_{k}\lie G$. Since the extension $K'/K$ is separable, while $E/K$ is purely inseparable, we have that $E\otimes_K K'$ is again a
field. Suppose that the $G$-torsor is defined over an extension
$L$ of $k$ contained in $K'$; we need to show that the
transcendence degree of $L$ over $k$ is at least~$n$. Let $\spec R
\arr \spec L$ be the $G$-torsor yielding $\spec (E\otimes_K K')
\arr \spec K'$ by base change. Clearly $R$ is a field.
By the definition of the Frobenius we have that the diagram
   \[
   \xymatrix{
   G\times \spec(R)\ar[d]^{\fr\times \fr}\ar[r]^{\mu} &
  \spec(R)\ar^{\fr}[d]\\
  G^{(p)}\times \spec(R)^{(p)}\ar[r]^{\mu}& \spec(R)^{(p)}}
   \]
   commutes, where $\mu$ is the action of $G$ on $\spec(R)$.
   Since $\fr$ is trivial on $G$ it follows, by the above diagram,  that
the Frobenius $\fr:\spec(R)\arr \spec(R)^{(p)}$ is
$G$-equivariant, where we consider the trivial action on the
target. Therefore  $\fr: \spec(R)\arrto \spec(R)^{(p)}$ factorizes through $\spec(R)\arr
\spec(R)/G=\spec(L)$. But the degree of the Frobenius $\fr:
\spec(R)\arr \spec(R)^{(p)}$ is equal, by the next lemma, to
$p^{d}$, where $d\eqdef\trdeg_k R$. While the degree of
$\spec(R)\arr \spec(L)$ is equal to the order of $G$, which is
$p^n$ with $n=\dim_k(\Lie G)$. Hence we have $\trdeg_k R=\trdeg_k
L\ge \dim_k(\Lie G)$, as wanted.

\begin{lemma}\cite[Corollary 3.2.27]{liu}
Let $R$ be a finitely generated extension of transcendence degree
$d$ of a perfect field $k$ of characteristic $p > 0$. Then the relative
Frobenius $\fr:\spec(R) \arr \spec(R)^{(p)}$ is a finite morphism
 of degree $p^{d}$.
\end{lemma}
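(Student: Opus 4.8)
The plan is to reduce the degree of the relative Frobenius to the field index $[R:R^p]$, and then to compute that index by a tower-law argument. At the level of algebras the relative Frobenius is a $k$-algebra homomorphism $\fr^\sharp\colon R^{(p)} \arr R$, where $R^{(p)} = R\otimes_k k$ with the right factor a $k$-algebra via the absolute Frobenius; concretely $\fr^\sharp(a\otimes\lambda) = \lambda a^p$. Since $R^{(p)}$ and $R$ are fields, $\fr^\sharp$ is injective, so its image is the subfield generated by $\{a^p : a\in R\}$ together with $k$, that is $R^p\cdot k$. Because $k$ is perfect we have $\lambda = (\lambda^{1/p})^p\in R^p$ for every $\lambda\in k$, hence $k\In R^p$ and $\im\fr^\sharp = R^p$. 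Again since $k$ is perfect, the base change $R^{(p)} = R\otimes_k k$ is taken along an isomorphism of $k$, so $R^{(p)}$ is a field isomorphic to $R$ and $\fr^\sharp$ is identified with the inclusion $R^p\into R$. Consequently $\fr\colon\spec R\arr\spec R^{(p)}$ is a morphism of spectra of fields whose degree is exactly $[R:R^p]$, and it is finite precisely when this index is finite.

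Next I would produce a convenient transcendence basis. As $R$ is a finitely generated extension of $k$ of transcendence degree $d$, one may choose among a finite set of generators a transcendence basis $x_1,\dots,x_d$ such that $R$ is a finite extension of $F\eqdef k(x_1,\dots,x_d)$. No separating basis is needed, which keeps the argument uniform. The goal then becomes $[R:R^p]=p^d$.

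For the computation I would apply the tower law to the chain $F^p\In R^p\In R$ and to $F^p\In F\In R$, giving
\[
[R:R^p]\,[R^p:F^p] = [R:F^p] = [R:F]\,[F:F^p].
\]
The absolute Frobenius is a field isomorphism $R\larrowsim R^p$ carrying $F$ onto $F^p$, so $[R^p:F^p]=[R:F]$; this step uses nothing about separability. It remains to note $[F:F^p]=p^d$: since $k$ is perfect, $F^p = k(x_1^p,\dots,x_d^p)$, and the monomials $x_1^{e_1}\cdots x_d^{e_d}$ with $0\le e_i < p$ form a basis of $F$ over $F^p$. Substituting these two equalities and cancelling the common factor $[R:F]$ yields $[R:R^p]=p^d$, which by the first paragraph is the degree of $\fr$; finiteness follows since the index is finite.

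The only genuinely delicate point is the bookkeeping in the first paragraph: one must keep track of the Frobenius twist defining $R^{(p)}$ and verify carefully that the degree of the \emph{$k$-morphism} $\fr$ coincides with the field index $[R:R^p]$ of the \emph{absolute} construction. Perfectness of $k$ enters there in an essential way (to identify $\im\fr^\sharp$ with $R^p$ via $k\In R^p$, and to see $R^{(p)}\cong R$), as well as in the evaluation $[F:F^p]=p^d$, where $k=k^p$ is exactly what keeps this index finite and equal to $p^d$.
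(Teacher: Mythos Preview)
The paper does not supply its own proof of this lemma; it simply quotes the statement with a reference to \cite[Corollary~3.2.27]{liu}. Your argument is correct and is essentially the standard one: identify the relative Frobenius over a perfect base with the inclusion $R^{p}\hookrightarrow R$, then compute $[R:R^{p}]$ via the tower $F^{p}\subseteq R^{p}\subseteq R$ and $F^{p}\subseteq F\subseteq R$ for a purely transcendental subfield $F=k(x_{1},\dots,x_{d})$ with $R/F$ finite. The two places where you invoke perfectness of $k$ (to get $k\subseteq R^{p}$ and hence $\im\fr^{\sharp}=R^{p}$, and to get $F^{p}=k(x_{1}^{p},\dots,x_{d}^{p})$) are exactly where it is needed.
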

%

\section{The proof of Theorem~\ref{thm2}}

Let us start with stating a few Lemmas that we will use in the
proof. The first two are well known.

\begin{lemma}\cite[Proosition IV \S 2, 2.5]{DG}
\label{lem3} Let $G$ be a commutative unipotent group scheme over
a field $K$. Then there exists  a central decomposition series
   \[
   1 = G_{0} \subseteq G_{1} \subseteq \dots \subseteq G_{r} = G
   \]
of $G$, such that each successive quotient $G_{i}/G_{i-1}$ is a subgroup scheme of $\ga$.
\end{lemma}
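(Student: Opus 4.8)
The statement is a structural fact about commutative unipotent group schemes, and the plan is to reduce it to the well-understood filtration of the group of unitriangular matrices. First I would invoke the standard characterization of unipotent group schemes as closed subgroup schemes of the group $\UU_{n}$ of upper unitriangular $n\times n$ matrices, obtaining a closed immersion $G\into\UU_{n}$ for some $n$. I would then equip $\UU_{n}$ with its canonical central series
\[
\UU_{n} = U^{0} \supseteq U^{1} \supseteq \dots \supseteq U^{n-1} = 1,
\]
where $U^{j}$ is the (characteristic, hence normal) subgroup scheme of matrices agreeing with the identity on the first $j$ superdiagonals. The successive quotient $U^{j}/U^{j+1}$ is a vector group $\ga^{\,n-1-j}$, because modulo $U^{j+1}$ the group law on the $(j+1)$-th superdiagonal entries is additive.

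Next I would intersect this series with $G$. Setting $H^{j}\eqdef G\cap U^{j}$, the chain $G=H^{0}\supseteq\dots\supseteq H^{n-1}=1$ is automatically central, since $G$ is commutative, so every subgroup scheme is central. Because $U^{j+1}\In U^{j}$, the $(j+1)$-th quotient map restricts on $H^{j}$ to a homomorphism with scheme-theoretic kernel $H^{j}\cap U^{j+1}=H^{j+1}$, whence $H^{j}/H^{j+1}$ is a closed subgroup scheme of $U^{j}/U^{j+1}\cong\ga^{\,n-1-j}$. Reversing the indexing turns this into the increasing central series required by the statement, except that its quotients so far embed into powers of $\ga$ rather than into $\ga$ itself.

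It then remains to refine each step. For a closed subgroup scheme $H\into\ga^{m}$, I would filter $\ga^{m}$ by the coordinate subgroups $1=V_{0}\In V_{1}\In\dots\In V_{m}=\ga^{m}$ with $V_{\ell}=\ga^{\ell}\times 0$, and set $H_{\ell}\eqdef H\cap V_{\ell}$. The $\ell$-th coordinate projection restricts to a homomorphism $H_{\ell}\arr\ga$ whose scheme-theoretic kernel is $H_{\ell-1}$, so $H_{\ell}/H_{\ell-1}$ is a closed subgroup scheme of $\ga$. Concatenating these refinements over all $j$ produces a single central series whose successive quotients all embed into $\ga$, which is exactly the asserted decomposition series. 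All the quotients invoked here exist and are well behaved because we are working with affine commutative group schemes, where fppf quotients are representable.

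The main obstacle is the very first step: producing the embedding $G\into\UU_{n}$ over an arbitrary, possibly imperfect, field $K$. Depending on the definition of \emph{unipotent} one adopts, this is either immediate or is precisely the part that requires the structure theory. An alternative route that avoids the matrix embedding is a downward induction on the size of $G$: one shows that $\hom(G,\ga)\neq 0$ for every nontrivial commutative unipotent $G$, picks a nonzero $\phi\colon G\arr\ga$, sets $G_{r-1}\eqdef\ker\phi$ (again commutative unipotent, of strictly smaller order and dimension) with $G/G_{r-1}$ a subgroup scheme of $\ga$, and iterates. This merely relocates the essential difficulty to the nonvanishing of $\hom(G,\ga)$, which is the genuine input from the structure theory supplied by the cited reference.
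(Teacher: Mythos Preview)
The paper does not prove this lemma at all: it is stated with a citation to \cite[Proposition IV \S 2, 2.5]{DG} and used as a black box. So there is no ``paper's own proof'' to compare against; you have supplied what the authors deliberately outsourced.

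Your argument is a correct reconstruction of the standard proof, and indeed essentially the one in Demazure--Gabriel. The embedding $G\hookrightarrow\UU_{n}$, the central series of $\UU_{n}$ with vector-group quotients, the intersection with $G$, and the coordinate-by-coordinate refinement of subgroup schemes of $\ga^{m}$ are all exactly as one expects. You are also right to flag that the first step---producing the embedding into $\UU_{n}$ over an arbitrary field---is where the real content lies, and that this is precisely what the cited reference provides (in DG the equivalence of the various characterizations of ``unipotent'' is established in that same section). Your alternative route via $\hom(G,\ga)\neq 0$ is likewise a standard variant and is equivalent in difficulty, as you observe.
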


\begin{lemma}\cite[Corollary IV \S 2, 2.6]{DG}\label{lem:unichange}
If $G$ is a group scheme over a field $K$ and $E$ is an extension
of $K$, then $\spec E\times_{\spec K}G \arr \spec E$ is unipotent
if and only if $G$ is unipotent. In particular, any twisted form
of a unipotent group scheme is unipotent.
\end{lemma}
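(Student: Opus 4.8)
The plan is to deduce the equivalence from the characterization of unipotence that is most transparently compatible with field extensions, handling the two implications by different criteria, and then to read off the statement about twisted forms as a formal consequence. Throughout write $G_E \eqdef \spec E\times_{\spec K}G$.

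First I would dispose of the direction ``$G$ unipotent $\Rightarrow G_E$ unipotent''. By definition a unipotent $G$ admits a normal series $1=G_{0}\In\dots\In G_{r}=G$ whose successive quotients $G_{i}/G_{i-1}$ are subgroup schemes of $\ga$ over $K$ (the general version of Lemma~\ref{lem3}). Base change along $K\In E$ is exact and preserves normality and the formation of quotients, so $(G_{i})_{E}$ exhibits $G_E$ as an iterated extension of subgroup schemes of $(\ga)_{E}=\ga$ over $E$; hence $G_E$ is unipotent.

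The content is the descent direction ``$G_E$ unipotent $\Rightarrow G$ unipotent'', which I expect to be the main obstacle, since descending a normal series directly is awkward. Here I would switch to the representation-theoretic criterion: an affine group scheme of finite type over a field is unipotent if and only if every nonzero finite-dimensional representation has a nonzero invariant subspace (equivalently, the trivial representation is its only simple one); for this I would cite Demazure--Gabriel. Granting it, let $V\neq 0$ be a representation of $G$ over $K$, that is a comodule $\rho\colon V\arr V\otimes_{K}\mathcal{O}(G)$, and set $V_{E}\eqdef V\otimes_{K}E$, a representation of $G_E$. The invariants $V^{G}=\ker\bigl(v\mapsto \rho(v)-v\otimes 1\bigr)$ are the kernel of a $K$-linear map, and since $E$ is flat over $K$ this kernel commutes with base change, giving $(V_{E})^{G_{E}}=V^{G}\otimes_{K}E$. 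As $G_E$ is unipotent and $V_{E}\neq 0$, the left-hand side is nonzero, forcing $V^{G}\neq 0$. Thus every nonzero representation of $G$ has invariants, so $G$ is unipotent. The only nontrivial inputs are the stated criterion and the flat base change of invariants; the latter is exactly what turns the delicate descent into a one-line argument.

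Finally, the assertion about twisted forms is formal. A twisted form $G'$ of a unipotent $G$ becomes isomorphic to $G$ after some faithfully flat base change, and since $\spec K$ is a point we may refine this to a field extension $E/K$ (take the residue field of a point of an fppf cover trivializing the form, and pull back), so that $G'_{E}\cong G_{E}$. By the first step $G_{E}$ is unipotent, hence so is $G'_{E}$, and by the descent step $G'$ itself is unipotent.
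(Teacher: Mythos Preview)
The paper does not prove this lemma; it simply quotes \cite[Corollary IV \S 2, 2.6]{DG} and moves on, so there is no in-paper argument to compare against.

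Your proof is correct. The forward direction by base-changing a normal series is fine, and for the descent direction the invariant-vector criterion together with the flat base-change identity $(V_{E})^{G_{E}}=V^{G}\otimes_{K}E$ is exactly the right tool; your justification of that identity as the base change of the kernel of $v\mapsto\rho(v)-v\otimes 1$ is sound. The twisted-form statement then follows formally once you refine an fppf trivializing cover to a residue field, as you do. One minor citation point: you invoke ``the general version of Lemma~\ref{lem3}'', but in the paper that lemma is stated only for commutative $G$; the non-commutative statement (a central series with $\ga$-subquotients) is of course one of the equivalent characterizations of unipotence in Demazure--Gabriel, so this is not a gap, just a reference to adjust.
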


\begin{lemma}\label{lem2}
Let $G$ be a commutative unipotent group scheme over a field $K$.
Then $\H^{i}(K, G) = 0$, for $i\ge 2$.
\end{lemma}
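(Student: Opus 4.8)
The plan is to combine a dévissage along the filtration furnished by Lemma~\ref{lem3} with the elementary computation of the fppf cohomology of $\ga$ itself. Throughout, cohomology is taken in the fppf topology, and the only homological input is that a short exact sequence of commutative group schemes over $K$ yields a long exact cohomology sequence. The base computation I would record first is that $\H^i(K,\ga)=0$ for every $i\ge 1$: since $\ga$ represents the quasi-coherent sheaf $\cO$, its fppf cohomology agrees with ordinary (Zariski) cohomology, and the latter vanishes in positive degrees because $\spec K$ is affine. This already disposes of the degenerate cases $G=0$ and $G=\ga$.

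Next I would carry out the reduction. By Lemma~\ref{lem3} choose a central series $1=G_0\In G_1\In\dots\In G_r=G$ with each quotient $H_i\eqdef G_i/G_{i-1}$ a subgroup scheme of $\ga$. Every short exact sequence $0\arr G_{i-1}\arr G_i\arr H_i\arr 0$ gives a long exact cohomology sequence, so if $\H^j(K,G_{i-1})=0$ and $\H^j(K,H_i)=0$ for all $j\ge 2$, then $\H^j(K,G_i)=0$ for all $j\ge 2$ as well. Inducting on $r$, the lemma reduces to the case where $G=H$ is a subgroup scheme of $\ga$.

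For the base case, a proper nontrivial subgroup scheme $H\In\ga$ is finite, and it is the kernel of a nonzero additive polynomial $P\colon\ga\arr\ga$. As $P$ is finite and faithfully flat, it is an epimorphism of fppf sheaves, so $0\arr H\arr\ga\xrightarrow{P}\ga\arr 0$ is exact. For $i\ge 2$ the relevant segment of its long exact sequence reads $\H^{i-1}(K,\ga)\arr\H^i(K,H)\arr\H^i(K,\ga)$, and both outer terms vanish by the computation above (the left because $i-1\ge 1$, the right because $i\ge 1$), forcing $\H^i(K,H)=0$.

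The dévissage is routine, and the substance sits in the base case. The step I expect to need the most care is the structural fact that every proper subgroup scheme of $\ga$ is the kernel of an additive polynomial, so that the cokernel of the inclusion is again $\ga$; this is classical, and it covers uniformly the infinitesimal subgroups (e.g.\ $\aalpha_p$, the kernel of $x\mapsto x^{p}$), the étale ones (e.g.\ $\ZZ/p\ZZ$, the kernel of $x\mapsto x^{p}-x$, giving the Artin--Schreier sequence), and their mixtures. It is precisely this uniformity that lets the single exact sequence $0\arr H\arr\ga\xrightarrow{P}\ga\arr 0$ do all the work.
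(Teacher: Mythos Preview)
Your proof is correct and follows essentially the same route as the paper's: reduce via the filtration of Lemma~\ref{lem3} to a subgroup scheme of $\ga$, then use that the quotient $\ga/H$ is again $\ga$ (the paper cites \cite[Proposition~IV \S 2, 1.1]{DG} for this, which amounts to your additive-polynomial description) together with the vanishing of $\H^{i}(K,\ga)$ for $i\ge 1$. The only difference is cosmetic---you spell out the d\'evissage and the long exact sequence explicitly, whereas the paper compresses these into two sentences.
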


\begin{proof}
By Lemma~\ref{lem3}, we may assume that $G$ is a subgroup of
$\ga$. The quotient $\ga/G$ is isomorphic to $\ga$ (\cite[Proposition IV \S 2,
1.1]{DG}); then the result follows from the fact that $\H^{i}(K,
\ga) =  0$ for $i\ge 1$.
\end{proof}

Now we prove the key Lemma.
\begin{lemma}\label{lem:main}
Suppose that we have an extension
   \[
   1 \arr G_1 \arr G \arr G_2 \arr 1
   \]
of group schemes over $k$, where $G_1$ is a commutative unipotent
normal group subscheme of a group scheme $G$. Let $P \arr \spec K$
be a $G$-torsor over an extension $K$ of $k$. Then there exists an
intermediate extension $k \subseteq F \subseteq K$ and a twisted
form $\widetilde{G_1} \arr \spec F$ of $G_1$ over $F$, such that $P$
is defined over an intermediate extension of transcendence degree at
most $\ed_{k} G_2 + \ed_{F} \widetilde{G_1}$ over $k$.

Furthermore, if $G_1$ is central in $G$, then $\widetilde{G_1} =
\spec F \times_{\spec k}G_1$.
\end{lemma}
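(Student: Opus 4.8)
The plan is to perform a dévissage of the $G$-torsor $P$ along the given extension, using the standard theory of twisting in non-abelian cohomology together with the vanishing of $\H^{2}$ furnished by Lemma~\ref{lem2}. First I would push $P$ forward along $G \arr G_2$ to obtain the quotient $G_2$-torsor $Q \eqdef P/G_1 \arr \spec K$. By the definition of essential dimension, $[Q]$ is defined over some intermediate field $k \In F \In K$ with $\trdeg(F/k) \le \ed_k G_2$; fix a $G_2$-torsor $Q_0 \arr \spec F$ with $Q_0 \times_{\spec F}\spec K \cong Q$. Since $G_1$ is commutative, the conjugation action of $G$ on $G_1$ factors through $G_2$, so I may twist and set $\widetilde{G_1} \eqdef {}^{Q_0}G_1$, a group scheme over $F$. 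Twisting preserves commutativity and, by Lemma~\ref{lem:unichange}, unipotence, so $\widetilde{G_1}$ is commutative unipotent over $F$.

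Next I would produce a lift of $Q_0$ already over $F$. The obstruction to lifting the class $[Q_0] \in \H^1(F,G_2)$ to $\H^1(F,G)$ lies in $\H^2(F,\widetilde{G_1})$, which vanishes by Lemma~\ref{lem2}; hence there is a $G$-torsor $P_0 \arr \spec F$ whose image in $\H^1(F,G_2)$ is $[Q_0]$. Base-changing to $K$, both $[P]$ and $[P_0 \times_{\spec F}\spec K]$ lie in the fibre of $\H^1(K,G)\arr\H^1(K,G_2)$ over $[Q]$. By the twisting theory this fibre is a single orbit under the transitive action of $\H^1(K,\widetilde{G_1}_K)$, where $\widetilde{G_1}_K \eqdef \widetilde{G_1}\times_{\spec F}\spec K$; therefore there exists a class $[T]\in \H^1(K,\widetilde{G_1}_K)$ with $[T]\cdot[P_0 \times_{\spec F}\spec K]=[P]$. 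Now $[T]$ represents a torsor under the base change to $K$ of the $F$-group $\widetilde{G_1}$, so $\ed_F[T]\le \ed_F\widetilde{G_1}$, and $[T]$ is defined over an intermediate field $F\In L\In K$ with $\trdeg(L/F)\le \ed_F\widetilde{G_1}$; fix $T_0\arr\spec L$ with $T_0\times_{\spec L}\spec K\cong T$.

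Finally, I would reconstruct $P$ over $L$. The action of $\H^1(-,\widetilde{G_1})$ on the fibres of $\H^1(-,G)\arr\H^1(-,G_2)$ is functorial in the base field, so the $G$-torsor $P_L$ obtained by letting $[T_0]$ act on $[P_0\times_{\spec F}\spec L]$ over $L$ satisfies $P_L\times_{\spec L}\spec K\cong P$; thus $P$ is defined over $L$. Since $\trdeg(L/k)=\trdeg(L/F)+\trdeg(F/k)\le \ed_F\widetilde{G_1}+\ed_k G_2$, this is the required intermediate extension. For the central case, if $G_1$ is central in $G$ then $G_2$ acts trivially on $G_1$ by conjugation, so the twist is trivial and $\widetilde{G_1}={}^{Q_0}G_1 = \spec F\times_{\spec k}G_1$, which is the final assertion. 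The main obstacle is the bookkeeping of fields of definition inside the twisting formalism: one must check both that the lifting obstruction is the abelian class in $\H^2(F,\widetilde{G_1})$, so that Lemma~\ref{lem2} may be applied over $F$ and not merely over $K$, and that the comparison of $P$ with $P_0$ is governed by a single $\widetilde{G_1}_K$-torsor $T$ whose descent $T_0$ to $L$ reconstructs $P$ through a base-change-compatible operation.
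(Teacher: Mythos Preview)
Your argument is correct and follows the same underlying strategy as the paper's proof; the only difference is one of language. Where you use the classical non-abelian cohomology formalism---the obstruction in $\H^{2}$ to lifting along $\H^{1}(F,G)\to\H^{1}(F,G_2)$, and the description of the fibre over $[Q_0]$ via twisting by a chosen lift $P_0$---the paper phrases the same two steps in terms of the gerbe $\cL$ of liftings of $Q_0$: it is banded by $\widetilde{G_1}$, its class in $\H^{2}$ vanishes by Lemma~\ref{lem2}, hence $\cL\simeq\cB_{E}\widetilde{G_1}$, and then $P\to Q$ viewed as an object of $\cL(\spec K)$ is precisely a $\widetilde{G_1}$-torsor, to which one applies the definition of $\ed_{E}\widetilde{G_1}$. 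Your fields $F$ and $L$ play the roles of the paper's $E$ and $F$, respectively.

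One small point of wording: writing the comparison as $[T]\cdot[P_0\times_{\spec F}\spec K]=[P]$ suggests a group action of $\H^{1}(K,\widetilde{G_1}_K)$ on the fibre, which is only literally correct when $G_1$ is central. In the general case the fibre over $[Q]$ is the \emph{image} of $\H^{1}(K,\widetilde{G_1}_K)$ under the twisting map $\tau_{P_0}$, and it is this map (not an action) that is compatible with base change. You already flag this concern in your final paragraph; with that caveat made precise, your proof goes through.
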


\begin{proof}
Consider the induced $G_2$-torsor $Q \eqdef P/G_1 \arr \spec K$.
There exists an intermediate extension $k \subseteq E \subseteq K$
with $\trdeg_{k}E \leq \ed G_2$, such that $Q$ comes by base
change from a $G_2$-torsor $Q_{E} \arr \spec E$. I claim that
$Q_{E}$ lifts to a $G$-torsor $P_{E} \arr \spec E$.

To see this, consider the fppf gerbe of liftings $\cL \arr
\catsch{E}$. It is a fibered category over the category $\catsch{E}$ of $E$-schemes, whose objects over an
$E$-scheme $T \arr \spec E$ are $G$-torsors $P_{T} \arr T$, together
with isomorphisms of $G_2$-torsors $P_{T}/G_1 \simeq T \times_{\spec
E} Q_{E}$, or, equivalently, $G$-equivariant morphisms of
$T$-schemes $P_{T} \arr T \times_{\spec E} Q_{E}$. The arrows from
$P_{T} \arr T \times_{\spec E} Q_{E}$ to $P'_{T'} \arr T'
\times_{\spec E} Q_{E}$ are defined in the obvious way, as diagrams
   \[
   \xymatrix{
   P_{T}\ar[d]^{F}\ar[r] &
   T\times_{\spec E}Q_{E} \ar[r]^-{\pr_{1}} \ar[d]^{f\times\id} &
   T \ar[d]^{f}\\
   P_{T'}\ar[r] &
   T'\times_{\spec E}Q_{E} \ar[r]^-{\pr_{1}} &
   T'
   }
   \]
in which $F$ is $G$-equivariant. We need to show that $\cL$ has a global section over $\spec E$.

The action of $G$ on $G_1$ by conjugation descends to an action of
$G_2$ on $G_1$, since $G_1$ is commutative. Denote by
$\widetilde{G_1}$ the twisted form of the group scheme $\spec
E\times_{\spec{k}} G_1$ coming from the $G_2$-torsor $Q_{E} \arr
\spec E$; in other words, $\widetilde{G_1}$ is the quotient
$(Q_{E}\times_{\spec k}G_1)/G_2$, where $G_2$ acts on the right on
$Q_{E}$, and on $G_1$ by left conjugation. We claim that the
gerbe $\cL$ is banded by $\widetilde{G_1}$; that is, if $P_{T}
\arr T\times_{\spec E}Q_{E}$ is an object of $\cL(T)$, the
automorphism group $\aut(P_{T})$ is isomorphic to
$\widetilde{G_1}(T)$, and this isomorphism is functorial in $T$.
In fact, the twisted form $\widetilde{G}$ of $G$ obtained as the
quotient $P_{T}\times_{\spec k}G$ by the diagonal action of $G$ (where $G$ acts on itself by conjugation) is the automorphism group scheme of the $G$-torsor $P_{T}
\arr T$, and it contains $\widetilde{G_1}$ as the subgroup scheme
of automorphisms inducing the identity on $T\times_{\spec
E}Q_{E}$. Hence $\widetilde{G_1}$ is the automorphism group scheme
of the object $P_{T} \arr T\times_{\spec E}Q_{E}$, and this proves
the claim.

By \cite[Section IV 3.4]{Gir}, the equivalence classes of gerbes banded by
$\widetilde{G_1}$ are para\-metrized by the group $\H^{2}(K,
\widetilde{G_1})$; a gerbe corresponds to $0$, i.e., it is
equivalent to the classifying stack $\cB_{E}\widetilde{G_1}$, if and
only if it has a section. Now, by Lemma~\ref{lem:unichange},
$\widetilde{G_1}$ is unipotent; hence by Lemma~\ref{lem2} $\H^{2}(E,
\widetilde{G_1}) = 0$, so $\cL$ has a section, and the $G_2$-torsor
$Q_{E} \arr \spec E$ lifts to a $G$-torsor $P_{E} \arr Q_{E} \arr
\spec E$.

There is no reason why $\spec K \times_{\spec E}P_{E}$ should be
isomorphic to $P \arr \spec K$ as a $G$-torsor. However, by
construction, we have $P/G_1 \simeq \spec K\times_{\spec E}Q_{E}$.
Since as we just saw $\cL$ is a trivial gerbe banded by
$\widetilde{G_1}$, we have that $\cL$ is equivalent to the
classifying stack $\cB_{E}\widetilde{G_1}$, in such a way that the
lifting $P_{E} \arr Q_{E}$ corresponds to the trivial torsor
$\widetilde{G_1} \arr \spec E$. Then $P \arr Q$ gives an object of
$\cL(\spec K)$; this will be defined over a intermediate extension
$E \subseteq F \subseteq K$ of transcendence degree at most $\ed_{E}
\widetilde{G_1}$ over $E$. So over $F$ there will exist an object
$P_{F} \arr \spec F\times_{\spec E}Q_{E}$ which is isomorphic to $P
\arr Q$ when pulled back to $K$. Hence $P$ is defined over $F$, and,
since the transcendence degree of $F$ is at most equal to
$\ed_{k}G_2 + \ed_{E} \widetilde{G_1}$, the result follows.
\end{proof}

Now we are ready to prove the theorem.

First of all, suppose that $G$ is a diagonalizable group of order $p^{n}$; then $G$ is a product $\mmu_{p^{d_{1}}} \times \dots \times\mmu_{p^{d_{r}}}$
 for certain positive integers $d_{1}$, \dots,~$d_{r}$ with $d_{1} + \dots + d_{r} = n$. Then $G$ is a subgroup scheme of $\gm^{r}$,
 hence by the Lemma \ref{lem1} we have
$\ed G \leq r\leq n$.

Now, assume that $G$ is commutative unipotent of order $p^{n}$,
with $n
> 0$. Suppose that $G$ is a subgroup of $\ga$; then again by
Lemma~\ref{lem1} we have $\ed_{k}G \leq \dim \ga = 1 \leq n$, and
we are done.

If $G$ is not a subgroup scheme of $\ga$, we proceed by induction on
$n$. Assume that the result holds for all commutative unipotent
subgroup schemes of order $p^{m}$ with $m < n$. Let $G_1$ be a
nontrivial subgroup scheme that is a subgroup scheme of $\ga$ and
call $p^{m}$ its order. The group scheme $G_1$ exists by Lemma
\ref{lem3}. Then by Lemma~\ref{lem:main} we have
   \[
   \ed_k G \leq \ed_k (G/G_1) + \ed_{F}{G_1}_{F} \leq (n-m) + m = n;
   \] so the result holds for $G$.

Let $G$ be a trigonalizable infinitesimal group scheme of order
$p^{n}$. Once again, we proceed by induction on $n$. Let us
suppose that the result is true for all trigonalizable groups of
order $p^{m}$ with $m < n$. By definition, $G$ is an extension
   \[
   1 \arr G_{\rmu} \arr G \arr G_{\rmd}\arr 1\,,
   \]
where $G_{\rmu}$ is unipotent and $G_{\rmd}$ is diagonalizable. We
may assume that $G_{\rmu}$ is non-trivial, otherwise $G$ is
diagonalizable and we are done. If $G_1$ denotes the center of
$G_{\rmu}$, then $G_1$ is a nontrivial commutative unipotent normal
subgroup of $G$; set $G_2 \eqdef G/G_1$. Call $p^{m}$ the order of
$G_2$; by induction hypothesis we have $\ed_{k}G_2 \leq m$. Once
again using Lemma~\ref{lem:main}, we have that $\ed_{k}G \leq
\ed_{k}G_2 + \ed_{F}\widetilde{G_1}$ for some twisted form of $G_1$;
but by Lemma~\ref{lem:unichange} the group scheme $\widetilde{G_1}$
is still commutative unipotent, hence by the previous case
$\ed_{F}\widetilde{G_1} \leq n-m$, and we are done.

\section{The essential dimension of some group schemes}

The inequality of Theorem~\ref{thm1} is not an equality  in general, even for infinitesimal group schemes. Furthermore, the inequality of Theorem~\ref{thm2} does not hold in general for finite non-trigonalizable group schemes.

\begin{example}\label{counterexample}
The following is due to R.~Lötscher, M.~MacDonald, A.~Meyer and Z.~Reichstein \cite[Example~6.3]{lmmr}. Suppose that $\ell/k$ is a cyclic extension of degree~$p$ of fields of characteristic~$p > 0$; call $\sigma$ a generator for the Galois group $\operatorname{Gal}(\ell/k)$.  The automorphism group of $\mmu_{p^{2}}$ is a cyclic group of order $p(p-1)$; choosing an element of order~$p$ in this automorphism group gives an action of $\operatorname{Gal}(\ell/k)$ on $\mmu_{p^{2},\ell}$, which defines, by Galois descent, a form $G$ of $\mmu_{p^{2}}$ over $k$. This is an infinitesimal group scheme over $k$ of order~$p^{2}$, and $\dim_{k} \lie G = 1$; however, its essential dimension is $p$.
\end{example}

This example gives a strict inequality in Theorem~\ref{thm1}, and shows how the inequality of Theorem~\ref{thm2} does not hold in general. However, we don't know any examples of either phenomenon over an algebraically closed field.

In order to investigate this question, let us give the following definition.

\begin{definition}\label{def: almost special} Let $G$ be an affine group scheme of finite
type over a field $k$. If
\begin{equation*}
\ed_k G = \dim_k \Lie(G)-\dim G
\end{equation*}
then $G$ is called \textsl{almost special}.
\end{definition}

If $G$ is a linear smooth group scheme over a field $k$, then it is almost-special if and only if it has essential dimension~$0$. The term ``almost-special'' is justified by the following fact.

Recall that a linear group scheme $G$ over a field $k$ is called \emph{special} if every $G$-torsor defined over an extension of $k$ is trivial. Special groups were introduced by J.-P.~Serre in \cite{Ser} and studied by A.~Grothendieck in \cite{grothendieck-torsion}, and by many other authors since then.

\begin{proposition}
A smooth affine group scheme of finite type over a field is almost-special if and only if it is special.
\end{proposition}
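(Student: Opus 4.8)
The plan is to exploit the remark made just before the proposition: since $G$ is smooth we have $\dim_{k}\lie G = \dim G$, so the defect $\dim_{k}\lie G - \dim G$ vanishes and ``almost-special'' is, for such $G$, literally the condition $\ed_{k}G = 0$. It therefore suffices to prove that a smooth affine group scheme $G$ of finite type over $k$ is special if and only if $\ed_{k}G = 0$. One implication is immediate: if $G$ is special, then every $G$-torsor over every extension $K/k$ is trivial, hence is pulled back from the trivial torsor $G \to \spec k$; as $\trdeg(k/k) = 0$, every torsor has essential dimension $0$ and so $\ed_{k}G = 0$.

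For the converse I would use the generic (versal) torsor. Since $G$ is a smooth linear algebraic group it admits a \emph{generically free} linear representation $V$; let $U \subseteq V$ be the dense open $G$-invariant locus on which $G$ acts freely, so that $\pi\colon U \to B \eqdef U/G$ is a $G$-torsor (see \cite{BF}). Put $K \eqdef k(B) = k(V)^{G}$ and let $\xi_{\mathrm{gen}} \to \spec K$ be the generic fibre of $\pi$. By the definition of essential dimension, $\ed_{k}\xi_{\mathrm{gen}} \le \ed_{k}G = 0$, so $\xi_{\mathrm{gen}}$ is defined over an intermediate field $k \subseteq L \subseteq K$ with $\trdeg(L/k) = 0$, i.e.\ $L$ algebraic over $k$. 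Since $K \subseteq k(V)$ and $k$ is algebraically closed in the purely transcendental extension $k(V)$, it is algebraically closed in $K$, forcing $L = k$. Hence the class of $\xi_{\mathrm{gen}}$ lies in the image of $\H^{1}(k, G) \to \H^{1}(K, G)$: there is a $G$-torsor $\eta_{0} \to \spec k$ with $\xi_{\mathrm{gen}} \simeq \eta_{0}\times_{\spec k}\spec K$.

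The last step is to spread out and specialize. The isomorphism $\xi_{\mathrm{gen}} \simeq (\eta_{0})_{K}$ extends to an isomorphism of $G$-torsors $\pi^{-1}(B_{0}) \simeq \eta_{0}\times_{\spec k}B_{0}$ over some dense open $B_{0} \subseteq B$. Now fix any infinite extension $F/k$ and any $G$-torsor $\eta$ over $F$. By the versality of the generic torsor, the set of $x \in B_{0}(F)$ with $\pi^{-1}(x) \simeq \eta$ is nonempty; for such an $x$ we also have $\pi^{-1}(x) \simeq (\eta_{0})_{F}$, and therefore $\eta \simeq (\eta_{0})_{F}$. Thus $\H^{1}(F,G)$ consists of the single class $[(\eta_{0})_{F}]$; being a pointed set it contains the neutral class, so it is trivial and every $G$-torsor over $F$ is trivial. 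For an arbitrary extension $K/k$ and $\xi \in \H^{1}(K,G)$, passing to the infinite field $K(t)$ trivializes $\xi$, and specializing at $t = 0$ (equivalently, using the injectivity of $\H^{1}(K,G) \to \H^{1}(K(t),G)$ valid for smooth affine $G$) shows that $\xi$ is already trivial over $K$. Hence $G$ is special.

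I expect the technical heart to be the versality and spreading-out step: justifying that the generic isomorphism with the constant torsor $(\eta_0)_K$ extends over a dense open $B_0$, and that the versal torsor realizes \emph{every} class $\eta$ at an $F$-point lying inside $B_0$ (the density form of versality). The passage from infinite fields to arbitrary—in particular finite—extensions via $K(t)$ and specialization is the other point requiring care, and I would cite the standard injectivity of restriction to a rational function field for smooth affine groups rather than reprove it.
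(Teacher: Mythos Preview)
Your argument is correct and opens exactly as the paper does: both set up the versal torsor $U\to U/G$ from a generically free representation, observe that $\ed_k G=0$ forces the generic torsor to descend to a torsor $\eta_0$ over $k$ itself (using that $k$ is algebraically closed in $k(V)$, hence in $K$), and then exploit versality. The executions then diverge. For infinite $F$ you use the density form of versality to realize every class in $\H^1(F,G)$ as a fibre over some $F$-point of $B_0$, hence equal to $[(\eta_0)_F]$, so $\H^1(F,G)$ is a one-element pointed set; the paper instead first shows that $\eta_0$ itself is trivial when $k$ is infinite, by producing a $k$-point of $\eta_0$ from $U(k)\neq\emptyset$ via the $G$-equivariant rational map $U\dashrightarrow\eta_0$, and only then invokes versality of the (now trivial) generic torsor. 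For finite fields the paper takes a quite different tack: it first proves that $G$ is \emph{connected} (by arguing that $\eta_0/G^0$ is an integral finite $k$-scheme inside $K$, hence $\spec k$) and then appeals to Lang--Steinberg to kill $\H^1$ over every finite field directly. You bypass connectedness and Steinberg entirely, relying instead on the injectivity of $\H^1(K,G)\to\H^1(K(t),G)$ together with the infinite-field case already established. Your route is more uniform and avoids the case split; the trade-off is that this injectivity---while indeed a standard specialization fact for smooth linear groups (specialize along a $K$-rational place of $K(t)$)---is something you must locate and cite carefully, whereas the paper's Lang--Steinberg reference is unambiguous.
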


This is immediate over an algebraically closed field, but seems to be new in this generality; we are grateful to the referee and to Z.~Reichstein (who also helped with the proof) for pointing this out.

\begin{proof} Let $G$ be a smooth affine group of finite type over a field $k$. It is obvious that if $G$ is special, then it is almost-special.

Assume that $G$ is almost-special. Let $V$ be a generically free finite-dimensional representation of $G$; by definition, there exists an non-empty open subscheme $U$ on which the action is free. Let $U/G$ be the quotient, which exist as an integral algebraic space of finite type over $k$; then the projection $U \arr U/G$ is a $G$-torsor. Let $\spec K$ be the generic point of $U/G$, and let $E \arr \spec K$ the pullback of the $G$-torsor $U \arr U/G$. Since $\ed_{k}G = 0$, then $E \arr \spec K$ is defined over an intermediate extension $k \subseteq \ell \subseteq K$ that is finite over $k$. Since $U/G$ is geometrically integral over $k$ we have that $k$ is algebraically closed in $K$, so $\ell = k$, and the torsor $E \arr \spec K$ is defined over $k$. We have a cartesian diagram
   \[
   \xymatrix{
   E \ar[r]\ar[d] & P\ar[d]\\
   \spec K \ar[r] & \spec k
   }
   \]
where $P \arr \spec k$ is $G$-torsor, and the morphism in the top row is $G$-equivariant.

Let us show that $G$ is connected. Let $G^{0}$ be the connected component of the identity in $G$. The scheme $P/G^{0}$ is integral, since $P$ is integral, and is finite over $k$; hence it is the spectrum of a finite extension of $k$ contained in $K$. So $P/G^{0} = \spec k$, and $G = G^{0}$, as claimed.

If $k$ is finite, then it follows from a famous theorem of Steinberg \cite[Theorem~1.9]{steinberg} that every $G$-torsor over a finite field is trivial. Since $\ed_{k}G = 0$ we see that every $G$-torsor over an extension of $k$ descends to a finite field, and so it is trivial.

Next, assume that $k$ is infinite. The torsor $E \arr \spec K$ is versal (see \cite[Example 5.3]{GMS}), hence if we show that it is trivial, then every other torsor over an extension of $k$ will be trivial. The $G$-equivariant morphism $E \arr P$ gives a $G$-equivariant rational map $U  \dashrightarrow P$. After restricting $U$, we may assume that $U  \dashrightarrow P$ is defined everywhere.  Since $U$ is a non-empty open subset  of an affine space over an infinite field, we have $U(k) \neq \emptyset$ and thus $P(k) \neq \emptyset$. So the torsor $P \arr \spec k$ is trivial, and hence so is $E \arr \spec K$.
\end{proof}

%
%



We observe that if $G$ is almost special then
$\ed_k G = \ed_k(G;p)$, where $p$ is the characteristic of $k$. In
the following we give several examples of almost special group
schemes. Most of them are connected and trigonalizable. We do not
know whether all connected trigonalizable group scheme are almost
special. For diagonalizable group scheme this is true (see
Example~\ref{ex: ed of some group schemes}). In the unipotent case
this is open: in \ref{ex4} we discuss what we consider to be a key
example  to clarify this question.

It is easy to give examples of infinitesimal almost special group
schemes that are not trigonalizable: for example, if $\cG$ is a
special non-trigonalizable smooth group scheme (e.g. $\cG=\Gl_n$),
the kernel of its Frobenius ${}_{\fr} \cG$ is almost special, as it
follows from Theorem~\ref{thm1} and Lemma~\ref{lem1}.

Finally we remark that if
$G$ and $H$ are almost special
 then the product $G\times H$ is also almost special, and
 $\ed_k(G\times H) = \ed_k G+\ed_k H$.
%

The following strategy allows us to find some almost special
group schemes. If an infinitesimal group scheme $G$ can be
embedded in a special algebraic group scheme of dimension
$n\eqdef\dim_k \Lie(G)$ then, using Theorem \ref{thm1} and Lemma
\ref{lem1}, we can conclude that the essential dimension is
exactly $n$, i.e. $G$ is almost special. Here is an example when
this happens. Later in \ref{ex:test-example} we will see another
example in which this argument can not be applied.

\begin{example}\label{ex: ed of some group schemes}
If $n$ is a positive integer, we denote by $W_{n}$ the group scheme of truncated Witt vectors of length $n$ (see \cite[Chapter~2, \S~6]{Ser-local}, \cite[Chapter~3, \S~1]{Haz} and \cite[Chapter~5, \S~1]{DG}). Let $G$ be the group scheme
\begin{equation*}
\prod_{j=1}^{t} {}_{\fr^{m_j}} W_{n_j}\times \prod_{i=1}^s
\mmu_{p^{l_i}} ,
\end{equation*}
where ${}_{\fr^{m}} W_{n}$ is the kernel of the iterated Frobenius
$\fr^m:W_n\arr W_n$.
 Then $G$ is almost special, i.e.
\begin{equation}\label{eq: 1}
\ed_k G = s+\sum_{j=1}^{t}n_j=\dim_{k}\Lie G. \end{equation} Indeed,
by the remark above, it is enough to remark that $G$ is a
closed subgroup of $\prod_{j=1}^{t} W_{n_j} \times \GG_m^s$, which is special.
\end{example}

Another class of examples is given by the following Corollary.
\begin{corollary}\label{Cor1}
 Any trigonalizable group scheme $G$ of height
$\le 1$ and of finite type over $k$ is almost special.
\end{corollary}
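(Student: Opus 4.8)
The plan is to sandwich $\ed_k G$ between the lower bound of Theorem~\ref{thm1} and the upper bound of Theorem~\ref{thm2}, after first pinning down the numerical invariants of a group scheme of height~$\le 1$.

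First I would record that the hypothesis forces $G$ to be finite and infinitesimal. Having height $\le 1$ means that the relative Frobenius $\fr \colon G \arr G^{(p)}$ is trivial, equivalently that $G = \ker \fr$; since $\fr$ is a finite morphism for a scheme of finite type over $k$, its kernel is a finite connected group scheme. In particular $\dim G = 0$, so the defining equality of ``almost special'' reduces to $\ed_k G = \dim_k \lie G$.

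The key structural input --- and the one point where I would be careful to cite the correct reference --- is that a finite group scheme of height $\le 1$ has order $p^{\dim_k \lie G}$: its coordinate ring is isomorphic to $k[x_1, \dots, x_d]/(x_1^p, \dots, x_d^p)$ with $d = \dim_k \lie G$, which one may read off from the classical equivalence between height~$\le 1$ group schemes and restricted Lie algebras (see \cite{DG}). Setting $n \eqdef \dim_k \lie G$, this gives $|G| = p^n$.

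Given these facts the two bounds are immediate. On one side, Theorem~\ref{thm1} together with the inequality $\ed_k G \ge \ed_k(G;p)$ yields
\[
\ed_k G \ge \dim_k \lie G - \dim G = n.
\]
On the other side $G$ is finite, trigonalizable, and of order $p^n$, so Theorem~\ref{thm2} gives $\ed_k G \le n$. Combining the two, $\ed_k G = n = \dim_k \lie G - \dim G$, so that $G$ is almost special. The two applications of the main theorems are routine; essentially all the content lies in the structural observation that height $\le 1$ forces $G$ to be infinitesimal with $|G| = p^{\dim_k \lie G}$.
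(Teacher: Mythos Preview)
Your proof is correct and follows exactly the same approach as the paper: both use the structural fact that a height~$\le 1$ group scheme of finite type has $\dim G = 0$ and order $p^{n}$ with $n = \dim_k \lie G$, and then sandwich $\ed_k G$ between the lower bound of Theorem~\ref{thm1} and the upper bound of Theorem~\ref{thm2}. Your write-up is simply more explicit about why $G$ is finite and why the order equals $p^{\dim_k \lie G}$, whereas the paper states this in a single sentence.
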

\begin{proof}
If $G$ has order $p^n$ then, since $G$ has height $\le 1$, the
dimension of its Lie algebra is $n$.
 Therefore the result follows from Theorems \ref{thm1} and \ref{thm2}.
\end{proof}

As a consequence of  Theorem \ref{thm2} and  Lemma \ref{lem:main}
we prove the following result.

\begin{corollary}\label{cor2}
Let
   \[
   1 \arr G_1 \arr G \arr G_2 \arr 1
   \]
be an extension of group schemes over a field $k$ of
characteristic $p>0$, with $G_1$ unipotent commutative of order
$p^n$, then
$$
\ed_k G\le n +\ed_k G_2.
$$
\end{corollary}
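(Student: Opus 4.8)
The plan is to apply Lemma~\ref{lem:main} directly to an arbitrary $G$-torsor and then control the twisted form of $G_1$ that appears by means of Theorem~\ref{thm2}. Since $\ed_k G$ is by definition the supremum, over all extensions $K/k$ and all $G$-torsors over $\spec K$, of their essential dimensions, it suffices to bound $\ed_k \xi$ for an arbitrary $G$-torsor $\xi\colon P \arr \spec K$, with $K$ an extension of $k$.

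First I would invoke Lemma~\ref{lem:main}. Because $G_1$ is a commutative unipotent normal subgroup scheme of $G$ with quotient $G_2$, the lemma produces an intermediate extension $k \subseteq F \subseteq K$ and a twisted form $\widetilde{G_1} \arr \spec F$ of $G_1$ such that $P$ is defined over an intermediate extension of transcendence degree at most $\ed_k G_2 + \ed_F \widetilde{G_1}$ over $k$. Hence $\ed_k \xi \le \ed_k G_2 + \ed_F \widetilde{G_1}$, and it remains only to bound $\ed_F \widetilde{G_1}$ by $n$.

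The key step is then to identify $\widetilde{G_1}$ well enough to apply Theorem~\ref{thm2}. As a twisted form of $G_1$, the group scheme $\widetilde{G_1}$ is again finite of order $p^n$, and commutativity is preserved under fppf descent, so it is commutative; by Lemma~\ref{lem:unichange} it is unipotent. A commutative unipotent group scheme is in particular trigonalizable, taking the whole group as its normal unipotent subgroup, with trivial (hence diagonalizable) quotient. Thus $\widetilde{G_1}$ is a finite trigonalizable group scheme of order $p^n$, and Theorem~\ref{thm2} gives $\ed_F \widetilde{G_1} \le n$. Combining, $\ed_k \xi \le n + \ed_k G_2$; taking the supremum over all torsors $\xi$ yields $\ed_k G \le n + \ed_k G_2$.

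The only point requiring care is verifying that the twisting operation preserves exactly the properties needed to feed $\widetilde{G_1}$ into Theorem~\ref{thm2}, namely that it remains commutative, unipotent, and of order $p^n$. These are all standard consequences of fppf descent together with Lemma~\ref{lem:unichange}, so no genuine obstacle arises: once Lemma~\ref{lem:main} and Theorem~\ref{thm2} are in hand, the corollary is essentially an immediate bookkeeping consequence of the two.
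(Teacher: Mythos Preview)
Your proof is correct and follows the same approach as the paper: apply Lemma~\ref{lem:main} to an arbitrary $G$-torsor, then use Lemma~\ref{lem:unichange} to see that the twisted form $\widetilde{G_1}$ is still commutative unipotent (hence trigonalizable) of order $p^n$, and conclude via Theorem~\ref{thm2}. The only difference is cosmetic --- you spell out explicitly why $\widetilde{G_1}$ is trigonalizable and why the order is preserved, whereas the paper leaves these implicit.
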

%
%
%
\begin{proof}
Let $P \arr \spec K$ be a $G$-torsor over an extension $K$ of $k$.
Then, from Lemma \ref{lem:main}, there exists an intermediate
extension $k \subseteq E \subseteq K$ and a twisted form
$\widetilde{G_1}
\arr \spec E$ of $G_1$ over $E$, such that
$$\ed_k P\le
\ed_E \widetilde{G_1}+ \ed_k G_2.$$
By Lemma
\ref{lem:unichange}, $\widetilde{G_1}$ is unipotent. Hence  by
Theorem \ref{thm2} we have $\ed_E \widetilde{G_1} \le n$,
 and  the result follows.
\end{proof}

\begin{example}\label{ex 1}
In addition to the hypotheses of  Corollary \ref{cor2}, let us
suppose that $G_1$  is of height $\le 1$, $G_2$ is almost special and
   \[
   \dim_k\Lie(G)=\dim_k \Lie(G_1)+\dim_k \Lie(G_2).
   \]
From Corollary \ref{cor2} and Theorem
\ref{thm1} it follows that
$$\ed_k G = \ed_k G_1 + \ed_k G_2.$$
The hypothesis on the
dimension of the Lie algebra of $G$ is satisfied, for instance, if
the extension is split.

\end{example}

\begin{example}\label{ex:test-example}
We do not have examples of trigonalizable group
schemes that are not  almost special.  If  such an example exists  a candidate should be
the following.

Let us consider the kernel $G_{r,m,n}$ of the morphism $\fr^m
+V^r:\W_n\arr \W_n$, where $V$ is the Verschiebung (called ``shift''
in \cite{Ser-local} and ``décalage'' in \cite{DG}), $m>0$ and
$0<r<n$. This is a group scheme of order $p^{mn}$. The dimension of
its Lie algebra is $r$, and it is embedded in a special group
scheme of dimension $n$, therefore $r\le \ed_k G_{r,m,n} \le n$.

Now consider the case $r=1$.  Then we will prove that $G_{1,m,n}$ can not be
embedded in any special algebraic group scheme $\cG$ of dimension
$1$, so we can not use an argument as in the Example \ref{ex: ed of
some group schemes} to conclude that $\ed_k G_{1,m,n} = 1$.  To prove
this fact we can clearly suppose that $k$ is algebraically closed.
Now let us suppose that such a group $\cG$ exists. First of all we
can suppose it is connected. Secondly we observe that it should be
unipotent. Indeed, since it is special it should be smooth (see Theorem \ref{thm1}) and affine
(\cite[Theorem 1]{Ser});
therefore, if it was not unipotent, from \cite[Proposition IV \S 2, 3.11]{DG} we
conclude that it contains a subgroup scheme isomorphic to $\GG_{m}$.
This implies that the kernel of Frobenius, $ _{\fr} \cG$, is not
unipotent, since it contains a subgroup scheme isomorphic to
$\mmu_{p}$. But since the Lie algebra of $G_{1,m,n}$ is equal to the
Lie algebra of $\cG$ we have that $_{\fr}\cG=  { _{\fr}} G_{1,m,n}$,
which is a contradiction since $G_{1,m,n}$ is unipotent. But any
unipotent smooth and connected group scheme of dimension $1$ over a
perfect field is isomorphic to $\GG_a$ (\cite[Corollary IV \S 2, 2.10]{DG}).
And $G_{1,m,n}$ is not a subgroup scheme of $\GG_a$.

This example is also interesting since,  if $k$ is perfect, the
group scheme $G_{1,1,2}$ is the $p$-torsion group scheme of a supersingular
elliptic curve (i.e., an elliptic curve with $p$-rank equal to zero). The $p^n$-torsion group
scheme of the same curve has a decomposition series with quotients
isomorphic to $G_{1,1,2}$.

\end{example}

\begin{example}\label{ex4} In \cite{Le}, Ledet conjectured that the essential dimension of
$\ZZ/p^n\ZZ$ over a field $k$ of characteristic $p$ is $n$. The
conjecture has been proved (easily) for $n\le 2$. Since $\ZZ/p^n\ZZ$
is contained in the group scheme of Witt vectors of dimension $n$,
$\W_n$, and this group is special then $\ed_k(\ZZ/p^n\ZZ)\le n$. The
same result also follows from Theorem \ref{thm2}, which for constant
$p$-group schemes was already known. The open problem (for $n>2$) is
to prove the opposite inequality. We remark that if this conjecture
is true over an algebraic closure $\bar{k}$ of $k$ then $\ed_k G$
is equal to $n$ for any twisted form $G$ of $\ZZ/p^n\ZZ$. Indeed
such a group scheme would be unipotent (see Lemma \ref{lem3}) so, by
the Theorem \ref{thm2}, its essential dimension is smaller or equal
to $n$. On the other hand it is at least $n$, since the essential
dimension does not increase by base change (see \cite[Proposition 1.5]{BF}).

\end{example}

\bibliographystyle{amsalpha}
\bibliography{biblioessdimension2}

\end{document}